 \newtheorem{thm}{Theorem}
 \newtheorem{lem}[thm]{Lemma}
 \theoremstyle{definition}
 \newtheorem{defn}[thm]{Definition}
 \theoremstyle{remark}
\renewcommand{\[}{\begin{equation}}
\renewcommand{\]}{\end{equation}}
 \newcommand{\C}{\mathbb{C}}
 \newcommand{\N}{\mathbb{N}}
 \newcommand{\Z}{\mathbb{Z}}
 \newcommand{\D}{\mathbb{D}}
 \newcommand{\A}{\mathcal{A}}
 \newcommand{\hH}{\mathcal{H}}
 \newcommand{\T}{\mathcal{T}} 
  \newcommand{\F}{\mathcal{F}} 
\newcommand{\hsp}{{\hspace{-1pt}}}
\newcommand{\hs}{{\hspace{1pt}}}
 \newcommand{\half}{\mbox{$\frac{1}{2}$}}
\newcommand{\lra}{\longrightarrow}
\newcommand{\ra}{\rightarrow}
\newcommand{\im}{\mathrm{i}}
\newcommand{\id}{\mathrm{id}}
\newcommand{\e}{\mathrm{e}}
\newcommand{\lN}{\ell_2(\N)}
\def\bD{\bar\D}
\def\LD{L_2(\D)}
\def\AD{A_2(\D)}
\def\KAD{K(A_2(\D))}
\def\KlN{K(\lN)}
\def\CD{C(\bar\D)}
\def\dom{\mathrm{dom}}
\def\spec{\mathrm{spec}}
\def\lin{\mathrm{span}}
\def\End{\mathrm{End}}
\def\ind{\mathrm{ind}}
\def\cspan{\overline{\mathrm{span}}}
\def\ddz{\mbox{$\frac{\partial}{\partial z}$}}
\def\ddbz{\mbox{$\frac{\partial}{\partial \bar z}$}}
\def\dz{\mbox{$\partial_z$}}
\def\dzs{\mbox{$\partial_z^{\hs*}$}}
\def\bS{\mathbb{S}^1} 
\def\bT{\mathbb{T}^2} 
\def\CbT{C(\mathbb{T}^2)} 
\def\CTq{C(\mathbb{T}^2_q)}
\def\s{\sigma} 
\def\a{\alpha} 
\def\ker{\mathrm{ker}} 
\def\CS{C(\mathbb{S}^1)}
\newcommand{\msum}[2]{\underset{#1}{\overset{#2}{\mbox{$\sum$}}}}
\title{Dirac operator on a noncommutative Toeplitz torus}
\author{{\sc Fredy D\'iaz Garc\'ia} \\
\normalsize
Instituto de F\'isica y Matem\'aticas\\
\normalsize
Universidad Michoacana de San Nicol\'as de Hidalgo, Morelia, M\'exico\\[2pt]
\normalsize
and\\[2pt]
\normalsize
Centro de Ciencias Matem\'aticas, Campus Morelia\\
\normalsize
Universidad Nacional Aut\'onoma de M\'exico (UNAM), Morelia, M\'exico\\
\normalsize
e-mail: {\it lenonndiaz@gmail.com}\\[16pt] 
{\sc Elmar Wagner\footnote{
corresponding author \ \  {\it MSC2010:} 58B34; 46L87.  \ \  
{\it Key Words:} Dirac operator, noncommutative torus, spectral triple, Toeplitz algebra.}
} \\
\normalsize
Instituto de F\'isica y Matem\'aticas\\
\normalsize
Universidad Michoacana de San Nicol\'as de Hidalgo, Morelia, M\'exico\\
\normalsize
e-mail: {\it elmar@ifm.umich.mx}}
\date{}                                       
\begin{document}
\maketitle

\begin{abstract}
We construct a $1^+$-summable regular even spectral triple 
for a noncommutative torus 
defined by a C*-subalgebra of the Toeplitz algebra. 
\end{abstract}

\maketitle

\section{Introduction} 
In noncommutative geometry \cite{C}, a noncommutative topological space is presented 
by a noncommutative C*-al\-ge\-bra. 
Usually definitions of such \mbox{C*-al}\-ge\-bras  
are motivated by imitating some features of the classical spaces. 
For instance, 
a noncommutative version of any compact two-dimensional surface 
without boundary can be found in \cite{W}, where 
the corresponding \mbox{C*-al}\-ge\-bras are defined as 
subalgebras of the Toeplitz algebra.  

The metric aspects of a noncommutative space are captured by the notation 
of a spectral triple \cite{CL}. 
Given a unital C*-algebra $A$,  
a spectral triple $(\A,\hH, D)$ for $A$ consists of a dense *-sub\-al\-gebra $\A\subset A$, 
a Hilbert space $\hH$ together with a faithful *-representation $\pi:\A\ra B(\hH)$, and a 
self-adjoint operator $D$ on $\hH$, called \emph{Dirac operator}, such that 
\begin{align}
&[D, \pi(a)] \in B(\hH)\ \ \text{for \,all} \ \ a\in\A, \\
& (D+\im)^{-1} \in K(\hH). 
\end{align}
Here $K(\hH)$ denotes the set of compact operators on $\hH$. 

The purpose of the present paper is the construction of a spectral triple for 
the noncommutative torus from \cite{W}. The noncommutative torus was chosen because 
the self-adjoint operator $D$ from the spectral triple 
has a similar structure to the Dirac operator on a classical torus with 
a flat metric. Our main theorem shows that this spectral triple is even, regular, and $1^+$-summable. 

For the convenience of the reader, we recall the definitions of 
the just mentioned properties of a spectral triple (see \cite{GFV}). 
By a slight abuse of notation, 
we will not distinguish between a densely defined closable operator and its closure. 
A spectral triple $(\A,\hH, D)$ is said to be \emph{even}, 
if there exists a grading operator $\gamma \in B(\hH)$ 
satisfying 
\[ \label{gam} 
\gamma^* = \gamma, \quad \gamma^2=1, \quad \gamma D=-D\gamma, \quad 
\gamma \pi(a) =\pi(a) \gamma \  \text{ for \,all } \  a\in\A. 
\] 
We call $(\A,\hH, D)$ \emph{regular}, if $\delta^k(a)\in B(\hH)$  and $\delta^k([D,a])\in B(\hH)$ 
for all $a\in \A$ and $k\in \N$, where $\delta(x):= [|D|,x]$ for $x\in B(\hH)$. 
The term \emph{$1^+$-summ\-able} means that $(1+ |D|)^{-(1+\epsilon)}$ is 
a trace class operator for all $\epsilon >0$ but $(1+ |D|)^{-1}$ is not a trace class operator.  

Consider the polar decomposition $D=F\hs |D|$ of the Dirac operator. 
The grading operator $\gamma$ gives rise to a decomposition $\hH=\hH_+\oplus \hH_-$ 
such that $\gamma=\begin{pmatrix} 1 & 0 \\0 & -1 \end{pmatrix}$ 
and $F= \begin{pmatrix} 0 & F_{+-} \\ F_{-+} & 0 \end{pmatrix}$. 
If the spectral triple satisfies the properties of the previous paragraph, then 
$F_{+-}$ and $F_{-+} $ are Fredholm operators and one defines $\ind(D):=\ind(F_{+-})$. 
The operator $F$ is called the fundamental class of $D$ and it is said to be non-trivial if 
$\ind(D)\neq 0$.

\section{Noncommutative Toeplitz torus}    \label{sec-1} 

Let $\D:= \{ z\in \C: |z|<1\}$ be the open unit disc and $\bD:= \{ z\in \C: |z|\leq 1\}$ 
its closure in $\C$. Consider the Hilbert space $\LD$ 
with respect to the standard Lebesgue measure 
and its closed subspace $\AD$ consisting of 
all $L_2$-functions which are holomorphic in $\D$. 
We denote by $P$ the orthogonal projection from $\LD$  onto $\AD$. 
For all $f\in \CD$, the Toeplitz operator $T_f\in B(\AD)$ is defined by 
$$
 T_f(\psi):= P(f\hs \psi), \qquad \psi \in \AD \subset \LD, 
$$
and the Toeplitz algebra $\T$ is the C*-algebra generated by all 
$T_f$ in $B(\AD)$. 

It is well known (see e.g.\ \cite{V}) that the compact operators $\KAD$ 
belong to $\T$ and that the quotient $\T / \KAD \cong \CS$ gives rise to 
the C*-algebra extension 
\[ \label{Cex1} 
\xymatrix{
 0\;\ar[r]&\; \KAD\;\ar[r] &\; \T \;\ar[r]^{\s\ \ } &\; \CS\;\ar[r] &\ 0\, ,} 
 \]
where $\s : \T \lra \CS$ is given by $\s(T_f) =  f\!\! \upharpoonright_{\mathbb{S}^1}$ 
for all $f\in \CD$. 

There are alternative descriptions for the Toeplitz algebra. For instance, 
consider the Hilbert space $L_2(\bS)$ with respect to the Lebesgue measure on $\bS$ 
and the orthonormal basis $\{ \frac{1}{\sqrt{2\pi}} u^k:k\in\Z\}$, where $u\in \CS\subset L_2(\bS)$ 
is the unitary function given by $u(\zeta) = \zeta$, \,$\zeta \in \bS$. 
Let $P_+$ denote the orthogonal projection from $L_2(\bS)$ 
onto $ \cspan\{ u^n : n\in \N\} \cong \lN$. For all $f\in \CS$, 
define $\hat T_f\in B(\lN)$ by 
\[ \label{TfN} 
 \hat T_f(\phi):= P_+(f\hs \phi), \qquad \phi \in \cspan\{ u^n : n\in \N\}  \subset L_2(\bS). 
\]
Then $\T$ is isomorphic to the C*-subalgebra of $B(\lN)$ generated by the operators 
$\{\hat T_f : f\in \CS\}$, and the C*-algebra extension \eqref{Cex1} becomes 
\[ \label{Cex2} 
\xymatrix{
 0\;\ar[r]&\; \KlN\;\ar[r] &\; \T \;\ar[r]^{\s\ \ } &\; \CS\;\ar[r] &\ 0\,}
 \]
with $\s(\hat T_f)=f$. 

Let us also mention that $\T$ may be considered as a deformation 
of the C*-al\-ge\-bra of continuous functions 
on the closed unit disc $\bD$ (see \cite{KL}). From this point of view, 
the equivalent C*-algebra extensions \eqref{Cex1} and \eqref{Cex2} correspond to 
the exact sequence 
\[ \label{Cex3} 
\xymatrix{
 0\;\ar[r]&\; C_0(\D)\;\ar[r] &\; C(\bD) \;\ar[r]^{\tau\ \ } &\; \CS\;\ar[r] &\ 0\, ,} 
 \]
where $\tau(f)= f\!\! \upharpoonright_{\mathbb{S}^1}$. 

Recall that the torus $\bT$ can be constructed as a topological manifold by 
dividing the boundary $\bS =\partial \bD$ into four quadrants and gluing 
opposite edges together. Then the C*-algebra of continuous functions on $\bT$ is 
isomorphic to 
\[ \label{CbT}
\CbT:= \{ f\in \CD \hs:\hs   f(\e^{\im t}) \hsp =\hsp f(-\im \e^{-\im t}),\ f(\e^{-\im t}) \hsp=\hsp f(\im\e^{\im t}), \ 
t\hsp\in\hsp [0,\mbox{$\frac{\pi}{2}$}]\}. 
\]
Motivated by \eqref{CbT} and the analogy between \eqref{Cex3}  and \eqref{Cex1} (or \eqref{Cex2}), 
we state the following definition of the noncommutative Toeplitz torus: 
\begin{defn} \label{CTq}
The C*-algebra of the noncommutative Toeplitz torus is defined by 
$$
\CTq\hsp :=
\hsp \{ a\hsp \in\hsp \T :   \s(a)(\e^{\im t}) \!=\! \s(a)(-\im \e^{-\im t}),\ \s(a)(\e^{-\im t}) \!=\! \s(a)(\im\e^{\im t}), \ 
t\!\in\! [0,\hsp\mbox{$\frac{\pi}{2}$}]\}. 
$$
\end{defn}
That $\CTq$ is a C*-subalgebra of $\T$ follows from the fact that $\s$ is a C*-al\-ge\-bra homomorphism. 
Note that gluing the point $\e^{\im t} \in \bS$ to $-\im \e^{-\im t}\in \bS$ 
and the point $\e^{-\im t} \in \bS$ to $\im\e^{\im t} \in\bS$ for all $t\hsp\in\hsp [0,\mbox{$\frac{\pi}{2}$}]$ 
yields a topological space homeomorphic to the wedge sum  $\bS\hsp\vee \bS$ of two pointed circles. 
Setting 
\[ \label{SvS} 
C(\bS\hsp\vee \bS) := \{ f\in\CS : f(\e^{\im t}) \!=\! f(-\im \e^{-\im t}),\ f(\e^{-\im t}) \!=\! f(\im\e^{\im t}), \ 
t\!\in\! [0,\hsp\mbox{$\frac{\pi}{2}$}]\}, 
\]
we can write 
\[ \label{aD}
\CTq = \{ a \in \T : \s(a)\in C(\bS\hsp\vee \bS)\}. 
\]
Moreover, \eqref{Cex2} and \eqref{aD} yield the C*-algebra extension 
$$
\xymatrix{
 0\;\ar[r]&\; \KlN\;\ar[r] &\; \CTq \;\ar[r]^{\s\ \ } &\; C(\bS\hsp\vee \bS)\;\ar[r] &\ 0\,.}
$$

\section{Spectral triple on the noncommutative Toeplitz torus}    \label{sec-1} 

The Dirac operator on a local chart in two dimensions with the flat metric, 
see \cite{F}, up to constant and change of orientation is given by 
\[ \label{Dclass} 
D=  \begin{pmatrix} 0 & \ddz \\ -\ddbz & 0 \end{pmatrix}, \quad  
\ddz = \half\Big( \mbox{$\frac{\partial}{\partial x}-\im \frac{\partial}{\partial y}$}\Big), \ \  
\ddbz=\half \Big(\mbox{$\frac{\partial}{\partial x}+\im \frac{\partial}{\partial y}$}\Big). 
\]
Since $\ddz$ acts on $\AD$ in the obvious way, we want to use the same structure to define a 
spectral triple for the noncommutative Toeplitz torus. 
Clearly, one can construct a noncommutative  version of 
any (orientable) compact surface without boundary by choosing 
appropriate boundary conditions in Definition~\ref{CTq} (see \cite{W}). However, by the 
Gauss--Bonnet theorem, only the (classical) torus admits a dense local chart with a flat metric, 
therefore we restrict here our discussion to the quantum analogue of the torus. 

Our principal aim is to find a dense *-subalgebra $\A\subset \CTq$ and an operator $\dz$, 
which should be closely related to $\ddz$ from \eqref{Dclass}, such that 
$[\dz, a]$ is bounded for all $a\in \A$. Recall that an orthonormal basis for $\AD$ is given by 
$\{\varphi_n \;{:}\; n\hsp\in\hsp\N\}$, where $\varphi_n:= \frac{\sqrt{n+1}}{\sqrt{\pi}} \hs z^n$ \cite{V}. 
Complex differentiation  yields $\ddz(\varphi_n) = \sqrt{n(n+1)} \hs \varphi_{n-1}$. 
If we define an operator $\dz$ on $\AD$ by $\dz(\varphi_n):= n\hs \varphi_{n-1}$, then 
$\ddz - \dz$ extends to a bounded operator on $\AD$ since the coefficients $ \sqrt{n(n+1)} -n$ 
are uniformly bounded. As a consequence, the commutators 
$[\ddz, a]$ are bounded for all $a\in\A$ if and only if the commutators with $\dz$ are bounded. 

In order to simplify the notation, we will use the description of the 
Toeplitz algebra on $\lN \cong \cspan\{ u^n : n\in \N\} \subset L_2(\bS)$. For $m\in\Z$, 
set $e_m:= \frac{1}{\sqrt{2\pi}} u^m$ and let $\dz$ be defined by 
\[ \label{dz}
\dz(e_n) := n \hs e_{n-1}\ \, \text{on} 
\ \, \dom(\dz) := \big\{ \msum{n\in\N}{} \a_n  e_n\in \lN:  \msum{n\in\N}{}  n^2 |\a_n|^2<\infty\big\}. 
\]
Moreover, consider the number operator $N$ on $\lN$ determined by 
\[ \label{N}
N(e_n) := n \hs e_{n}\quad  \text{on} 
\quad \dom(N) := \dom(\dz). 
\]
Let $S$ be the unilateral shift operator on $\lN$ so that we have 
\[ \label{S}
S(e_n) = e_{n+1}, \ \ n\in\N,  \quad S^*(e_n) = e_{n-1}, \ \ n>1, \quad S^*(e_0)=0. 
\]
Since $N$ is a self-adjoint positive operator on $\dom(N) = \dom(\dz)$ and since 
$S^*$ is a partial isometry such that $\ker(S^*)=\mathrm{Ran}(N)^\bot$, 
it follows that $\dz=S^*N$ is the polar decomposition of the closed operator $\dz$. 
Clearly, $\dzs=N S$, so 
\[ \label{dzs} 
\dzs(e_n) = (n+1) \hs e_{n+1}\quad  \text{and} \quad \dom(\dzs)= \dom(N). 
\] 

Under the unitary isomorphism $\AD \cong \lN$ given by $\varphi_n \mapsto e_n$ 
on the bases described above, the operator $\dz$ on $\lN$ is unitary equivalent to a bounded perturbation 
of the Cauchy-Riemann operator $\ddz$ on $\AD$. 
Therefore we take $\dz$ on $\lN$ as a replacement for $\ddz$ on $\AD$. 

Note that, in the commutative case and with functions represented by multiplication operators, 
one has $[\ddz, f] = \frac{\partial f}{\partial z}$ for all $f\in C^{(1)}(\D)$
but clearly not all 
continuous functions are differentiable. In the following, we will single out a dense 
*-subalgebra $\A\subset \CTq\subset B(\lN)$ which can be viewed as an algebra of 
infinitely differentiable functions. 
With $C(\bS\vee\bS)\subset \CS$ defined in \eqref{SvS}, set 
$C^{\infty}(\bS\vee\bS):= C(\bS\vee\bS)\cap C^{\infty}(\bS)$ and let 
$$
\A_0:= \{ \hat T_f : f\in C^{\infty}(\bS\vee\bS)\} \subset \CTq. 
$$
Using the obvious embedding $\End(\lin\{e_1,\ldots,e_n\})\subset K(\lN)\subset \CTq$, consider 
$$
\F_0:= \bigcup_{n\in\N} \hs \End(\lin\{e_1,\ldots,e_n\})\subset \CTq. 
$$
We will take $\A$ to be the *-subalgebra of $\CTq$ generated by the elements of $\A_0$ and $\F_0$, 
i.e., 
\[ \label{A}
\A:= \text{{\rm *-alg}}(\A_0 \hs\cup\hs \F_0)\subset \CTq. 
\] 
\begin{lem} \label{lem}
The algebra $\A$ defined in \eqref{A} is dense in $\CTq$ 
and its elements admit bounded commutators with $\dz$ and $\dzs$. 
Furthermore, $\delta_N^k(a)$, $\delta_N^k([\dz,a])$ and $\delta_N^k([\dzs,a])$ are 
bounded for all $a\in\A$ and $k\in \N$, where $\delta_N(x):=[N,x]$ for $x\in B(\lN)$. 
\end{lem}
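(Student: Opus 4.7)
The plan splits naturally into three tasks: density of $\A$ in $\CTq$, boundedness of $[\dz,a]$ and $[\dzs,a]$ for $a\in\A$, and boundedness of the iterated $N$-commutators. For the \emph{density} claim I would exploit the C*-algebra extension $0\to \KlN\to \CTq\to C(\bS\vee\bS)\to 0$ stated just before the lemma. Given $a\in\CTq$ and $\varepsilon>0$, I would first approximate $\s(a)\in C(\bS\vee\bS)$ uniformly by some $f\in C^\infty(\bS\vee\bS)$; here density of $C^\infty(\bS\vee\bS)$ in $C(\bS\vee\bS)$ follows by symmetrising any smooth approximation on $\bS$ over the finite group generated by the two gluing involutions. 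Then $a-\hat T_f$ lies within $\varepsilon/2$ of $\KlN$, so I pick $k\in\KlN$ with $\|a-\hat T_f-k\|<\varepsilon/2$ and finally approximate $k$ by a finite-rank operator in $\F_0$.

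For the \emph{commutator statements} it suffices to check boundedness on the generators $\A_0\cup\F_0$ (both closed under $*$) and propagate to $\A$ by the Leibniz rule. On $\F_0$ the claim is immediate: each such $a$ has range and support in a fixed finite-dimensional subspace $\lin\{e_1,\ldots,e_n\}$, so $[\dz,a]$ and $[\dzs,a]$ are finite-rank. For $a=\hat T_f\in\A_0$, a direct Fourier computation using $\hat T_f(e_n)=\sum_{m\geq 0}\hat f_{m-n}\,e_m$ yields the central identity
\[
[N,\hat T_f] = \hat T_{-\im f'},
\]
which is bounded because $f'\in C^\infty(\bS)$. Writing $\dz=S^*N$ and $\dzs=NS$, one further checks that $[S^*,\hat T_f]$ and $[S,\hat T_f]$ both have one-dimensional range contained in $\C\,e_0$; since $Ne_0=0$, the defect terms $[S^*,\hat T_f]N$ and $N[S,\hat T_f]$ vanish identically, leaving
\[
[\dz,\hat T_f] = S^*\hat T_{-\im f'}, \qquad [\dzs,\hat T_f] = \hat T_{-\im f'}\,S,
\]
both bounded.

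For the \emph{regularity under $\delta_N$} I would combine two observations: iterating the identity above gives $\delta_N^k(\hat T_f)=\hat T_{(-\im)^k f^{(k)}}$, bounded for every $k$ since $f\in C^\infty(\bS)$; and the elementary relations $\delta_N(S)=S$, $\delta_N(S^*)=-S^*$ hold. Applying the Leibniz rule to the displayed formulas for $[\dz,\hat T_f]$ and $[\dzs,\hat T_f]$ then expresses each $\delta_N^k([\dz,\hat T_f])$, resp.\ $\delta_N^k([\dzs,\hat T_f])$, as a finite linear combination of operators of the form $S^*\hat T_{f^{(j)}}$, resp.\ $\hat T_{f^{(j)}}S$, and hence bounded. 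For $a\in\F_0$, every $\delta_N^k$-iterate remains supported on a fixed finite-dimensional subspace and is automatically bounded. Extension from generators to arbitrary $a\in\A$ is then a straightforward induction on word length using $\delta_N(ab)=\delta_N(a)b+a\delta_N(b)$ together with the Leibniz rules for $[\dz,\cdot\,]$ and $[\dzs,\cdot\,]$.

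The only \emph{non-routine} step is the boundedness of $[\dz,\hat T_f]$ itself: since $\dz$ is unbounded, both summands of $\dz\hat T_f-\hat T_f\dz$ are a priori unbounded, and one must identify the precise algebraic cancellation arising from $[S^*,\hat T_f]$ being supported on $\ker N$. Once this is in hand, all remaining statements reduce to Leibniz-rule bookkeeping combined with the elementary fact that Toeplitz operators with $C^\infty$ symbol are bounded.
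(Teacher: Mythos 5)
Your proof is correct and follows essentially the same route as the paper: density via the extension $0\to\KlN\to\CTq\to C(\bS\vee\bS)\to 0$ together with Stone--Weierstrass, reduction to the generating set $\A_0\cup\F_0$ by the Leibniz rule, and the Fourier-series identity $[N,\hat T_f]=-\im\hat T_{f'}$. Your computation of $[\dz,\hat T_f]$ via the factorization $\dz=S^*N$ and the vanishing of the defect term $[S^*,\hat T_f]N$ is a harmless variant of the paper's direct computation $[\dz,\hat T_f]=-\im\hat T_{\bar u f'}$; the two answers agree because $S^*\hat T_{f'}=\hat T_{\bar u}\hat T_{f'}=\hat T_{\bar u f'}$. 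One wording slip worth fixing: $[S^*,\hat T_f]$ does \emph{not} have range in $\C e_0$ (its range is spanned by $\sum_{n\ge 0}\hat f(n+1)\hs e_n$); rather it \emph{vanishes} on $e_m$ for all $m\ge 1$, i.e.\ it is supported on $\ker N=\C e_0$, which, as you correctly state at the end, is the property that together with $Ne_0=0$ forces $[S^*,\hat T_f]N=0$ (range in $\C e_0$ is the correct description of $[S,\hat T_f]$ and is what kills $N[S,\hat T_f]$).
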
 
\begin{proof}
The set $\F_0$ contains all finite operators on $\lin\{e_n:n\in\N\}$, 
therefore it is dense in $\KlN$. As a consequence, all compact operators $\KlN$ belong to 
the closure of~$\A$. 
From \eqref{TfN}, it follows that $\|\hat T_f\|\leq \|f\|_\infty$. 
By the Stone--Weierstrass theorem, $C^{\infty}(\bS\vee\bS)$ is dense in $C(\bS\vee\bS)$ 
with respect to the norm $\|\cdot\|_\infty$.  
Thus each $\hat T_g\in \CTq$ can be approximated by elements from $\A_0$. Let $a\in \CTq$. 
Writing $a = a-\hat T_{\s(a)} + \hat T_{\s(a)}$, where 
$\hat T_{\s(a)} \hsp\in\hsp  \CTq$ and $a-\hat T_{\s(a)}\hsp\in\hsp\KlN$, we conclude that $a$ lies in the 
clo\-sure of $\A$, so $\A$ is dense in $\CTq$.

By the Leibniz rule $[A, BC]=[A, B]C+B[A, C]$ for the commutator $[\cdot\hs ,\cdot]$, 
it suffices to prove the boundedness of the commutators for the elements belonging to 
the generating set $\A_0 \hs\cup\hs \F_0$. 
From the definitions of $\F_0$ and $N$, it follows that 
$Na\in \F_0$ and $aN\in \F_0$ for all $a\in\F_0$. This immediately that 
implies $\delta_N^k(a)\in B(\lN)$ for all $k\in \N$ since each term
of the iterated commutators belongs to $\F_0\subset B(\lN)$. 
Note also that $aS^*\in \F_0$ and  $S^*a\in \F_0$ for all $a\in\F_0$, 
therefore
$[\dz, a] =  S^* (N a) - (a S^*)N \in \F_0$. 
In particular,  $[\dz, a]$ and $\delta_N^k([\dz, a])$ are bounded 
for all $k\in\N$. 

Next  consider $\hat T_f\in \A_0$. To determine the action of $\hat T_f$ on $\lN$, 
we represent $f$ by its Fourier series 
$f = \sum_{k\in\Z} \hat f(k) \hs u^k$, where $\hat f(k)\in\C$. 
Since multiplication by $u^k$ yields $u^k  e_m = e_{m+k}$, one obtains from \eqref{TfN} 
\[ \label{Tfrep}
\hat T_f(e_m) = P_+\Big(\msum{k\in\Z}{} \hat f(k) \hs u^k\hs e_m\Big) = 
P_+\Big(\msum{k\in\Z}{} \hat f(k) \hs  e_{m+k}\Big) = \msum{n\in\N}{} \hat f(n\hsp-\hsp m)\hs e_n\hs.
\]
If $f\in C^{\infty}(\bS)$, then partial integration shows that  $f'\in \CS$ has the Fourier series 
$f' = \sum_{k\in\Z}  \im k \hat f(k) \hs u^k$. 
Therefore, for all $m\in\N$, 
\begin{multline}
[N, \hat T_f](e_m) = 
 \msum{n\in\N}{} n \hat f(n\hsp-\hsp m)\hs e_{n} -   \msum{n\in\N}{} m \hat f(n\hsp-\hsp m)\hs e_{n} 
 =   \msum{n\in\N}{} (n\hsp-\hsp m) \hat f(n\hsp-\hsp m)\hs e_{n}\\
 = -\im \hs P_+\Big( \msum{k\in\Z}{} \im (k\hsp-\hsp m) \hat f(k\hsp-\hsp m)\hs e_{k} \Big)  
 = -\im \hs \hat T_{f'}(e_m)  \label{Tf} 
\end{multline} 
by \eqref{Tfrep} and the Fourier series of $f'$. Similarly, 
\begin{align} 
[\dz, \hat T_f](e_m) &= 
 \msum{n\in\N}{} n \hat f(n\hsp-\hsp m)\hs e_{n-1} -   \msum{n\in\N}{} m \hat f(n\hsp-\hsp (m\hsp -\hsp 1))\hs e_{n} \nonumber \\
 &=   \msum{n\in\N}{} (n\hsp-\hsp m+1) \hat f(n\hsp-\hsp m+1)\hs e_{n} 
 = -\im \hs P_+\Big(\bar u \msum{k\in\Z}{} \im (k\hsp-\hsp m) \hat f(k\hsp-\hsp m)\hs e_{k} \Big)  \nonumber \\
  &= -\im \hs \hat T_{\bar u f'}(e_m).  \label{Tuf} 
\end{align} 
This yields 
$[\dz, \hat T_f] =  -\im \hs \hat T_{\bar u f'}\in B(\lN)$, 
\,$\delta_N^k(\hat T_f) = (-\im)^k \hs  \hat T_{f^{\hspace{-0.5pt}(k)}} \in B(\lN)$,  
and $\delta_N^k([\dz, \hat T_f]) = (-\im)^{k+1} \hs  \hat T_{(\bar uf')^{\hspace{-0.5pt}(k)}} \in B(\lN)$, 
the latter because $\bar uf'$ is a $C^{\infty}$-function. 
The statement for $\dzs$ can be proven analogously or by using 
$[\dzs, a]= -[\dz, a^*]^*$ together with 
$a^*\in \F_0$ for all $a\in\F_0$  and $\hat T_f^* = \hat T_{\bar f}$ for all $f\in\CS$. 
\end{proof} 

Now we are in a position to construct our spectral triple and describe its fundamental properties. 
\begin{thm} 
Let $\A$ denote the dense *-subalgebra of \hs$\CTq$ from Lemma \ref{lem}. 
Set $\hH:= \lN\oplus\lN$ and
define a *-representation $\pi: \A\ra B(\lN\hsp\oplus\hsp\lN)$  by 
$\pi(a):= a\oplus a$.  Consider the self-adjoint operator 
$$
D:=  \begin{pmatrix} 0 & \dz \\ \dzs & 0 \end{pmatrix} \quad \text{on} \quad 
\dom(D):= \dom(N) \oplus \dom(N). 
$$
Then $(\A,\hH,D)$ is a $1^+$-summable regular even spectral triple for $\CTq$ 
with grading operator $\gamma:= \id \oplus (-\id)$.  
The Dirac operator $D$ has discrete spectrum $\spec(D) =\Z$, 
each eigenvalue $k \in \spec(D)$ has multiplicity~1, and 
a complete set of eigenvectors $\{b_k:k\in\Z\}$ satisfying $Db_k=k\hs b_k$ 
is given by 
$$
b_k:= \mbox{$\frac{1}{\sqrt{2}}$}(e_{k-1} \oplus e_k), \quad 
b_{-k}:= \mbox{$\frac{1}{\sqrt{2}}$}(- e_{k-1} \oplus e_k), \quad  k>0, \qquad b_0:= 0\oplus e_0. 
$$ 
Its fundamental class $F= \begin{pmatrix} 0 & S^* \\ S& 0 \end{pmatrix}$ 
is non-trivial and $\ind(D)=1$. 
\end{thm}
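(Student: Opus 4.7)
My plan is to break the theorem into five pieces---grading, eigenbasis and spectrum, compact resolvent with $1^+$-summability, regularity, and the index---which become essentially independent once $D$ is explicitly diagonalised.

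First I would verify the grading axioms \eqref{gam} for $\gamma=\id\oplus(-\id)$: they are immediate from the block forms of $D$ and $\pi(a)=a\oplus a$. For the spectrum, applying $D$ to each claimed eigenvector using $\dz(e_n)=n\hs e_{n-1}$ and $\dzs(e_n)=(n+1)\hs e_{n+1}$ produces $D b_k=k\hs b_k$ by direct arithmetic. Orthonormality of $\{b_k:k\in\Z\}$ is an inner-product check, and completeness follows because $\lin\{b_k,b_{-k}\}=\lin\{e_{k-1}\oplus 0,\,0\oplus e_k\}$ for $k\geq 1$ together with $b_0=0\oplus e_0$ exhaust the standard orthonormal basis of $\hH$. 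Hence the closure of $D$ is self-adjoint with $\spec(D)=\Z$ and simple eigenvalues. Consequently $(D+\im)^{-1}$ has eigenvalues $(k+\im)^{-1}\to 0$ and lies in $K(\hH)$, while $|D|$ has eigenvalue $|k|$ of multiplicity $2$ for $k\geq 1$ and $0$ of multiplicity $1$; therefore $\mathrm{Tr}\bigl((1+|D|)^{-(1+\epsilon)}\bigr)=1+2\sum_{k\geq 1}(1+k)^{-(1+\epsilon)}$ is finite precisely when $\epsilon>0$, giving $1^+$-summability.

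For regularity, the pivotal computation is $|D|=(N+1)\oplus N$, obtained from $D^2=\dz\dzs\oplus\dzs\dz$ together with $\dz\dzs(e_n)=(n+1)^2 e_n$ and $\dzs\dz(e_n)=n^2 e_n$. Then $\delta(\pi(a))=\pi(\delta_N(a))$, so $\delta^k(\pi(a))=\pi(\delta_N^k(a))\in B(\hH)$ by Lemma~\ref{lem}. For $[D,\pi(a)]$, which is off-diagonal with blocks $[\dz,a]$ and $[\dzs,a]$, a short block-matrix computation shows that $\delta$ replaces each off-diagonal entry $X$ by $\delta_N(X)\pm X$, the sign coming from the mismatch between $N+1$ and $N$; iterating yields $\delta^k([D,\pi(a)])$ as a finite binomial sum of operators $\delta_N^j([\dz,a])$ and $\delta_N^j([\dzs,a])$ with $0\leq j\leq k$, all bounded by Lemma~\ref{lem}.

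For the fundamental class I would check directly that $F=\begin{pmatrix}0 & S^*\\ S & 0\end{pmatrix}$ satisfies $F|D|=D$ and has kernel $\ker D=\C b_0$, which is a one-line computation on basis vectors; equivalently $F b_k=b_k$ and $F b_{-k}=-b_{-k}$ for $k>0$, and $F b_0=0$. With $\hH_+=\lN\oplus 0$ under the grading, $F_{+-}=S^*$, and since $\ker S^*=\C e_0$ while $S^*$ is surjective, $\ind(D)=\ind(S^*)=1$ and $F$ is non-trivial. The main obstacle, I expect, is the regularity step: one must first pin down $|D|=(N+1)\oplus N$ before any $\delta$-calculation, after which regularity follows cleanly from Lemma~\ref{lem}; everything else reduces to arithmetic on the orthonormal eigenbasis $\{b_k\}_{k\in\Z}$.
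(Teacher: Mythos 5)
Your proposal is correct and follows essentially the same route as the paper: reduce everything to the explicit polar decomposition $|D|=(N+1)\oplus N$, deduce bounded commutators and regularity from Lemma~\ref{lem}, diagonalise $D$ on the basis $\{b_k\}$ for the spectrum and $1^+$-summability, and read off $\ind(D)=\ind(S^*)=1$. Your treatment of $\delta$ on the off-diagonal blocks (the extra $\pm X$ term from the mismatch between $N+1$ and $N$) is in fact slightly more careful than the paper's one-line remark, but it leads to the same conclusion.
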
 
\begin{proof}
We have already mentioned that the operator $\dz=S^*N$ is closed. 
Hence $D$ is self-adjoint by its definition. 
Since $[D,\pi(a)]$ has $[\dz,a]$ and $[\dzs,a]$ as its non-zero matrix entries, 
the boundedness of these commutators for all $a\in\A$ follows from Lemma~\ref{lem}. 
As $\dz =S^*N$  and $\dzs= NS= S(N\hsp +\hsp 1)$, the polar decomposition of $D$ reads as 
\[ \label{PD}
D= F\hs |D| =  \begin{pmatrix} 0 & S^* \\ S& 0 \end{pmatrix} \begin{pmatrix} N+1 & 0 \\ 0& N \end{pmatrix} . 
\]
In particular, the entries of the commutators with $|D|$ are given by commutators with $N$, thus 
the regularity can easily be deduced from Lemma~\ref{lem}. 
Clearly,  $\gamma$, $D$ and $\pi(a)$ satisfy \eqref{gam}, so the spectral triple is even. 
From \eqref{dz} and \eqref{dzs}, it follows immediately that $D(b_k)=k\hs b_k$ 
for all $k\in \Z$. Since $\{b_k: k\in \Z\}$ is an orthonormal basis for $\hH$, we have 
$\spec(D) =\Z$ and each eigenvalue has multiplicity 1. The $1^+$-summability 
follows from the convergence behavior of the series $\sum_{k\in\Z} (1+|k|)^{-(1+\epsilon)}$, 
$\epsilon \geq 0$. Finally, by the polar decomposition given in \eqref{PD}, $\ind(D)=\ind(S^*)=1$. 
\end{proof} 

\section*{Acknowledgements} 
This work was partially supported by CIC-UMSNH and 
the Polish Government grant 3542/H2020/2016/2, and  
and is part of the project supported by the EU funded grant H2020-MSCA-RISE-2015-691246-QUANTUM DYNAMICS.

\end{document}